\theoremstyle{plain}
\newtheorem{thm}{Theorem}[section] % reset theorem numbering for each section
\newtheorem*{thm*}{Theorem}
\newtheorem*{mainthm}{Main Theorem}
\newtheorem{prop}[thm]{Proposition}
\newtheorem{lem}[thm]{Lemma}
\newtheorem*{question*}{Question}
\theoremstyle{definition}
\newtheorem{exa}[thm]{Example}
\newtheorem{rem}[thm]{Remark}
\newcommand*{\myproofname}{Sketch proof}
\newcommand{\Z}{\mathbb{Z}}
\newcommand{\Q}{\mathbb{Q}}
\RenewDocumentCommand{\title}{om}{%
  \IfNoValueTF{#1}
     {\gdef\shorttitle{Homotopy Fibrations Over Connected Sums}}
     {\gdef\shorttitle{#1}}%
  \gdef\@title{#2}%
}
\renewcommand{\mathbb}{\varmathbb}
\begin{document}

\title{The Rational Homotopy Type of Homotopy Fibrations Over Connected Sums}
\author{Sebastian Chenery}
\address{Mathematical Sciences, University of Southampton, Southampton SO17 1BJ, United Kingdom}
\email{s.d.chenery@soton.ac.uk}

\subjclass[2020]{Primary 55P35; Secondary 57N65}
\keywords{Loop spaces, connected sums}

% \noindent {\scshape Keywords:} Loop spaces, connected sums

% \noindent {\scshape Mathematics Subject Classification (2020):} 55P35, 57N65

\begin{abstract}
We provide a simple condition on rational cohomology for the total space of a pullback fibration over a connected sum to have the rational homotopy type of a connected sum, after looping. This takes inspiration from recent work of Jeffrey and Selick, in which they study pullback fibrations of this type, but under stronger hypotheses compared to our result.

\end{abstract}

\maketitle

\section{Introduction}

Taking inspiration from \cite{js}, we begin with a homotopy fibration $F\rightarrow L\xrightarrow{f} C$ in which all spaces have the homotopy type of Poincar\'e Duality complexes; that is to say simply connected, finite dimensional $CW$-complexes whose cohomology rings satisfy Poincar\'e Duality. Writing $dim(C)=n$ and $dim(L)=m$, let $B$ be another $n$-dimensional Poincar\'e Duality complex. Form the connected sum $B\#C$, and take the natural collapsing map $p:B\#C\rightarrow C$. Defining the $m$-dimensional complex $M$ as the pullback of $f$ across $p$, we have a homotopy fibration diagram
\begin{equation}\label{dgm:main1}
    \begin{tikzcd}[row sep=1.5em, column sep = 1.5em]
        F \arrow[rr] \arrow[dd, equal] && M \arrow[dd] \arrow[rr] && B\#C \arrow[dd, "p"] \\
        \\
        F \arrow[rr] && L \arrow[rr, "f"] && C.
    \end{tikzcd}
\end{equation}
A natural question follows: to what extent does $M$ behave like a connected sum?

Jeffrey and Selick give a partial answer to the above question in \cite{js}. They consider the question when each space is a closed, oriented, smooth, simply connected manifold, but in the stricter setting of fibre bundles, and construct a space $X'$ with the property that there is an isomorphism of homology groups \[H_k(M;\Z)\cong H_k(X';\Z)\oplus H_k(L;\Z)\] for $0<k<m$ \cite{js}*{Theorem 3.3}. This suggests that in certain circumstances we might expect there to be an $m$-dimensional manifold $X$, such that $M\simeq X\#L$. Jeffrey and Selick show that there are contexts in which such an $X$ exists, and others where it cannot exist\footnote{At time of writing, it is known that the current arXiv version of \cite{js} contains a mistake. It has been communicated to me privately that a new version has been prepared, which recovers the same main results, and will be published and uploaded online soon.}.

Similar questions to the above have been asked recently. Duan \cite{duan} approaches the topic from a much more geometric, surgery theoretic viewpoint. In this work, the principal objects of concern are manifolds which exhibit a regular circle action; namely, a free circle action on an $n$-dimensional closed, oriented, smooth, simply connected manifold whose quotient space is an $(n-1)$-dimensional closed, oriented, smooth, simply connected manifold. Translated into the context of \cite{js}, Duan studies the situation when $F\simeq S^1$. If $L$ is of dimension at least 5, it is shown in \cite{duan} that the total space of the pullback fibration is indeed always diffeomorphic to a connected sum. Although the thrust of \cite{duan} is mainly concerned with constructing smooth manifolds that admit regular circle actions, it is interesting to remark that its strategy yields a specific class of examples for the situation as in Diagram (\ref{dgm:main1}). Other recent work includes that of Huang and Theriault \cite{huangtheriault}, in which they consider the loop space homotopy type of manifolds after stabilisation by connected sum with a projective space. They do so by combining the results of \cite{duan} with a homotopy theoretic analysis of special cases of Diagram (\ref{dgm:main1}).

In this paper we give a special circumstance, recorded in Proposition \ref{prop:1}, in which the based loop space of $M$ is homotopy equivalent to the based loops of a connected sum. This takes its most dramatic form in the context of rational homotopy theory, which we state in the Main Theorem below. Let $\overline{C}$ and $\overline{L}$ denote the $(n-1)$- and $(m-1)$-skeleta of $C$ and $L$, respectively. 
\begin{mainthm}[Theorem \ref{thm:connsum}]
Given spaces and maps as in Diagram (\ref{dgm:main1}), if 
\begin{enumerate}
    \item[(i)] the fibre map \(F\rightarrow M\) is (rationally) null homotopic, and,
    \item[(ii)] both $H^*(\overline{C};\Q)$ and $H^*(\overline{L};\Q)$ are generated by more than one element,
\end{enumerate}
there is a rational homotopy equivalence $\Omega M\simeq \Omega (X\#L)$ for an appropriate \(CW\)-complex \(X\) which we construct in Section \ref{sec:pullbacks}. 
\end{mainthm}
\noindent Thus we are able to give an affirmative answer in this situation, but after looping and up to rational homotopy equivalence. Examples of homotopy fibrations that fulfil the criteria of the Main Theorem include certain sphere bundles. Furthermore, note that a consequence of the Main Theorem is that there is an isomorphism of rational homotopy groups: $\pi_*(M)\otimes\Q\simeq\pi_*(X\#L)\otimes\Q$.

The author would like to thank their PhD supervisor, Stephen Theriault, for the many enlightening discussions during the preparation of this work, as well as Paul Selick and Lisa Jeffrey for their generosity. The author also wishes to thank the reviewer for their helpful and insightful comments. 

% +++COMPETING INTERESTS DECLARATION+++
\subsection*{Competing Interests Declaration:} the author declares none.
% -------------------------------------

\section{Preliminaries}

For two path connected and based spaces $X$ and $Y$, the \textit{(left) half-smash} of $X$ and $Y$ is the quotient space \[X\ltimes Y=(X\times Y)/(X\times y_0)\] where $y_0$ denotes the basepoint of $Y$. Furthermore, it is a well known result that if $Y$ is a co-$H$-space, then there is a homotopy equivalence $X\ltimes Y\simeq(X\wedge Y)\vee Y$.

We now move to another definition. For a homotopy cofibration $A\xrightarrow{f} B \xrightarrow{j} C$, the map $f$ is called \textit{inert} if $\Omega j$ has a right homotopy inverse. This an integral version of a notion used in rational homotopy theory, namely \textit{rational inertness}, which we define in Section \ref{sec:rational}. We will make use of the following result, due to Theriault \cite{t20}. 

\begin{thm}[Theriault]\label{thm:splitfib}
Let $A\xrightarrow{f} B \xrightarrow{j} C$ be a homotopy cofibration of simply connected spaces, where the map $f$ is inert. Then there is a homotopy fibration \[\Omega C\ltimes A\rightarrow B\xrightarrow{j}C.\] Moreover, this homotopy fibration splits after looping, so there is a homotopy equivalence $\Omega B\simeq\Omega C\times\Omega(\Omega C\ltimes A).$ \hfill $\square$
\end{thm}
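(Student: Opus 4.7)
The plan is to first identify the homotopy fibre of $j$ with $\Omega C \ltimes A$, and then to deduce the multiplicative splitting from the inertness hypothesis. Let $F$ denote the homotopy fibre of $j$. Since $A \xrightarrow{f} B \xrightarrow{j} C$ is a homotopy cofibration, the composite $j \circ f$ is canonically null-homotopic, so $f$ admits a lift $\tilde{f}: A \to F$. The usual holonomy action $\mu: \Omega C \times F \to F$ associated with the fibration $F \to B \to C$ combines with $\tilde{f}$ to give a map
\[
\theta: \Omega C \times A \xrightarrow{1 \times \tilde{f}} \Omega C \times F \xrightarrow{\mu} F.
\]
The restriction of $\theta$ to $\Omega C \times \{*\}$ is, up to homotopy, the connecting map $\partial: \Omega C \to F$ of the fibration. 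The inertness hypothesis — that $\Omega j$ has a right homotopy inverse — forces $\partial$ to be null-homotopic, since in the homotopy fibration $\Omega C \xrightarrow{\partial} F \to B$ the map $\partial$ is null exactly when $\Omega j$ splits. Consequently $\theta$ descends to the quotient, producing the desired comparison map
\[
\Theta: \Omega C \ltimes A \longrightarrow F.
\]

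Next I would prove that $\Theta$ is a homotopy equivalence. All spaces in sight are simply connected (the long exact sequence of the fibration, together with surjectivity of $\pi_2(j)$ coming from the section of $\Omega j$, shows $\pi_1(F) = 0$), so by Whitehead it suffices to verify $\Theta$ is a homology equivalence. This is the main technical step. The splitting of $\Omega j$ gives an isomorphism $H_*(\Omega B) \cong H_*(\Omega C) \otimes H_*(\Omega F)$ as $H_*(\Omega C)$-modules. Running the Serre spectral sequence of $F \to B \to C$ collapses modulo extensions, and comparing to the cofibration long exact sequence $\tilde H_*(A) \to \tilde H_*(B) \to \tilde H_*(C)$ pins down $H_*(F)$ in terms of $H_*(A)$ and $H_*(\Omega C)$. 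The upshot is that $H_*(F)$ matches $H_*(\Omega C \ltimes A) \cong H_*(A) \oplus (H_*(\Omega C) \otimes \tilde H_*(A))$ degree-by-degree, and tracing through shows $\Theta$ realises this isomorphism. An alternative route, which may be cleaner, is to invoke Mather's cube theorem applied to the cube built from the cofibre square $A \to B \to C$ and the path-loop fibration over $C$, pulling the identification out formally.

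Once the identification $F \simeq \Omega C \ltimes A$ is in hand, the splitting $\Omega B \simeq \Omega C \times \Omega(\Omega C \ltimes A)$ is formal: given the fibration $\Omega F \to \Omega B \to \Omega C$ and a right homotopy inverse $s: \Omega C \to \Omega B$ of $\Omega j$, the composite
\[
\Omega F \times \Omega C \xrightarrow{\Omega i \times s} \Omega B \times \Omega B \xrightarrow{\text{mult}} \Omega B
\]
is a homotopy equivalence by a standard $H$-space argument, since it induces an isomorphism on homotopy groups via the long exact sequence of the fibration and the existence of the section.

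The principal obstacle is the middle step, namely establishing that $\Theta$ is a homology equivalence. The construction of $\Theta$ is formal and the loop-level splitting from the section is standard, but extracting an unlooped identification of $F$ with $\Omega C \ltimes A$ requires genuine input — either a delicate Serre spectral sequence comparison or an appeal to a Mather-type cube theorem — and it is precisely here that the inertness hypothesis does work beyond merely splitting the loops.
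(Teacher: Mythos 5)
The paper itself does not prove this statement: it is quoted from Theriault's work \cite{t20} (note the $\square$ with no argument given), where the proof is a substantial undertaking, so there is no internal proof to compare yours against. That said, your outer skeleton does match the genuine strategy: build $\Theta:\Omega C\ltimes A\to F$ from the holonomy action and a lift $\tilde{f}$ of $f$, observe that $\theta|_{\Omega C\times\{*\}}\simeq\partial$ and that $\partial\simeq\partial\circ\Omega j\circ s\simeq *$ when $\Omega j$ has a right inverse $s$, so $\theta$ descends to the half-smash; and, once $F\simeq\Omega C\ltimes A$ is known, the multiplicative splitting $\Omega B\simeq\Omega C\times\Omega F$ is the standard $H$-space argument. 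Those steps are correct as written.

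The gap is exactly where you locate it, and neither of the two routes you offer fills it. First, the Serre spectral sequence of $F\to B\xrightarrow{j}C$ has no reason to collapse, even modulo extensions; and the splitting you invoke gives $H_*(\Omega B)\cong H_*(\Omega C)\otimes H_*(\Omega F)$, which constrains $H_*(\Omega F)$ rather than $H_*(F)$ --- recovering $H_*(F)$ from loop-space data is precisely the hard part. Second, Mather's cube theorem cannot ``pull the identification out formally'': applied to the pushout square $A\to B$, $*\to C$ and the path--loop fibration over $C$, it yields only that the double mapping cylinder of $\Omega C\xleftarrow{p_1}\Omega C\times A\xrightarrow{\theta}F$ is contractible, a statement that holds with no inertness hypothesis whatsoever. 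That the conclusion genuinely requires inertness is shown by $S^{n-1}\to D^n\to S^n$: there $F\simeq\Omega S^n$, while $\Omega S^n\ltimes S^{n-1}$ has strictly larger homology, and of course $f$ is not inert. So any correct argument must feed the right homotopy inverse of $\Omega j$ into the unlooped identification of $F$ itself; in \cite{t20} this is done through a careful James--Ganea style analysis of the fibration action, not a routine spectral sequence comparison. As it stands, your middle step asserts the desired outcome rather than proving it.
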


% \begin{equation}\label{eq:splitfib}
%   \Omega D\simeq\Omega E\times\Omega(\Omega E\ltimes A).
% \end{equation}

Take now a different situation, in which we have two homotopy cofibrations of simply connected spaces \[A\xrightarrow{f} B \xrightarrow{j} C\text{\; and \;}Y\xrightarrow{i}B\xrightarrow{p}X.\] In the diagram below, each complete row and column is a homotopy cofibration, and the bottom-right square is a homotopy pushout, defining the new space $Q$ and the maps $h$ and $q$:
\begin{equation}\label{dgm:setup}
    \begin{tikzcd}[row sep=1.5em, column sep = 1.5em]
        && Y \arrow[rr, equal] \arrow[dd, "i"] && Y \arrow[dd, "j\circ i"] \\
        &&&&& \\
        A \arrow[rr, "f"] \arrow[dd, equal] && B \arrow[rr, "j"] \arrow[dd, "p"] && C \arrow[dd, "h"] \\
        &&&&& \\
        A \arrow[rr, "p\circ f"] && X \arrow[rr, "q"] && Q.
    \end{tikzcd}
\end{equation}
We record an elementary fact in the following lemma, for ease of reference.

\begin{lem} \label{lem:loophinv}
Take the setup of Diagram (\ref{dgm:setup}). If the maps $\Omega p$ and $\Omega q$ have right homotopy inverses, then so does $\Omega h$. Moreover, there is a homotopy equivalence \[\Omega C\simeq\Omega Q\times\Omega(\Omega Q\ltimes Y).\]
\end{lem}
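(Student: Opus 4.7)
The plan is to identify the right column of Diagram~(\ref{dgm:setup}) as a homotopy cofibration and then feed it into Theorem~\ref{thm:splitfib}.

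First I would verify that $Y \xrightarrow{j \circ i} C \xrightarrow{h} Q$ is itself a homotopy cofibration. Since the bottom-right square is a homotopy pushout, $Q \simeq C \cup_B X$; and since $Y \xrightarrow{i} B \xrightarrow{p} X$ is a homotopy cofibration, $X$ is the mapping cone of $i$. Pasting the two pushouts then gives $Q \simeq C \cup_Y \ast$, so $h$ is the cofibre map of $j \circ i$.

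Next I would build a right homotopy inverse of $\Omega h$ from the given ones. Let $\sigma_p : \Omega X \to \Omega B$ and $\sigma_q : \Omega Q \to \Omega X$ be right homotopy inverses to $\Omega p$ and $\Omega q$ respectively. Commutativity of the pushout square yields $h \circ j \simeq q \circ p$, so after looping, $\Omega h \circ \Omega j \simeq \Omega q \circ \Omega p$, and the latter composite admits $\sigma_p \circ \sigma_q$ as a right homotopy inverse. Consequently $\Omega j \circ \sigma_p \circ \sigma_q : \Omega Q \to \Omega C$ is a right homotopy inverse of $\Omega h$, proving the first assertion and showing, equivalently, that the cofibration map $j \circ i$ is inert.

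For the second assertion I would apply Theorem~\ref{thm:splitfib} directly to the homotopy cofibration $Y \to C \to Q$ of simply connected spaces, whose attaching map has now been shown to be inert. This produces a homotopy fibration $\Omega Q \ltimes Y \to C \to Q$ which splits after looping, yielding the claimed equivalence $\Omega C \simeq \Omega Q \times \Omega(\Omega Q \ltimes Y)$. I do not anticipate a genuine obstacle in carrying out this plan; the step that demands the most care is the pushout-pasting argument identifying the right column as a cofibration, but this is routine.
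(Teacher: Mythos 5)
Your proposal is correct and follows essentially the same route as the paper: the right homotopy inverse of $\Omega h$ is the composite $\Omega j$ with the two given sections (identical to the paper's $\Omega j\circ t\circ s$), and the splitting then comes from applying Theorem \ref{thm:splitfib} to the right-hand column. The only difference is that you explicitly re-derive via pushout-pasting that the right column is a homotopy cofibration, which the paper takes as part of the setup of Diagram (\ref{dgm:setup}).
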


\begin{proof}
Let us denote the right homotopy inverses of $\Omega q$ and $\Omega p$ by $s$ and $t$, respectively. Then, by the homotopy commutativity of Diagram (\ref{dgm:setup}), $\Omega h$ has right homotopy inverse given by the composite $\Omega j\circ t \circ s$.

As $Y$ and $C$ are simply connected, the space $Q$ is as well, and the map $j\circ i$ is by definition inert. Hence we may apply Theorem \ref{thm:splitfib} to the right-most column of (\ref{dgm:setup}), obtaining the asserted homotopy equivalence.
\end{proof}

Finally, recall that the spaces considered by Jeffrey and Selick in \cite{js} have the homotopy type of oriented, smooth, closed, simply connected manifolds, and are thus Poincar\'e Duality complexes; that is to say, they have have the homotopy type of simply connected $CW$-complexes whose cohomology rings satisfy Poincar\'e Duality. For such a complex there exists a $CW$ structure having a single top-dimensional cell. For brevity, given a $k$-dimensional Poincar\'e Duality complex $Y$, let $\overline{Y}$ denote its $(k-1)$-skeleton, and note that there exists a homotopy cofibration\[S^{k-1}\xrightarrow{f}\overline{Y}\rightarrow \overline{Y}\cup_f e^k\simeq Y\] where $f$ is the attaching map of the top-cell of $Y$. Furthermore, given two \(k\)-dimensional Poincar\'e Duality complexes \(X\) and \(Y\), whose top-dimensional cells are attached by maps \(f\) and \(g\) (respectively), one forms their connected sum by means of the composite \[f+g:S^{k-1}\xrightarrow{\sigma}S^{k-1}\vee S^{k-1}\xrightarrow{f \vee g}\overline{X}\vee\overline{Y}\] where \(\sigma\) is the usual comultiplication. The homotopy cofibre of \(f+g\) is defined to be \(X\#Y\). In particular, \(\overline{X\#Y}\simeq\overline{X}\vee\overline{Y}\), and there is a homotopy cofibration \[\overline{X}\rightarrow X\#Y\rightarrow Y.\]

\section{Pullbacks over Connected Sums}\label{sec:pullbacks}

The situation we wish to study begins with a homotopy fibration \(F\rightarrow L\xrightarrow{f} C\), in which each space has the homotopy type of a Poincar\'e Duality complex. As in the introduction, let $dim(C)=n$ and $dim(L)=m$, and let $B$ be another $n$-dimensional Poincar\'e Duality complex. We form the connected sum $B\#C$, and take the natural collapsing map $p:B\#C\rightarrow C$. Defining the $m$-dimensional complex $M$ as the pullback of $f$ across $p$, we have a homotopy fibration diagram
\begin{equation}\label{dgm:main2}
    \begin{tikzcd}[row sep=1.5em, column sep = 1.5em]
        F \arrow[rr, "\alpha"] \arrow[dd, equal] && M \arrow[dd, "\pi"] \arrow[rr] && B\#C \arrow[dd, "p"] \\
        \\
        F \arrow[rr] && L \arrow[rr, "f"] && C
    \end{tikzcd}
\end{equation}
where we denote the induced map \(M\rightarrow L\) by \(\pi\) and the fibre map \(F\rightarrow M\) by \(\alpha\). 

\begin{lem}\label{lem:pushout}
With spaces and maps as in Diagram (\ref{dgm:main2}), there is a homotopy pushout square
\begin{equation*}
    \begin{tikzcd}[row sep=3em, column sep = 3em]
        F\times\overline{B} \arrow[r] \arrow[d, "p_1"] & M \arrow[d, "\pi"]  \\
        F \arrow[r] & L.
    \end{tikzcd}
\end{equation*}
Where the map \(p_1\) is projection to the first factor. In particular, if \(\alpha\) is null homotopic, there is a homotopy cofibration \(F\ltimes\overline{B}\rightarrow M\xrightarrow{\pi} L\).
\end{lem}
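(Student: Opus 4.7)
The plan is to establish the pushout square by pulling back the fibration $L\to C$ along the homotopy cofibration $\overline{B}\to B\#C\xrightarrow{p} C$ via Mather's cube theorem, and then, under the hypothesis that $\alpha$ is null homotopic, to upgrade this pushout to a cofibration by strictifying the top horizontal map and collapsing a contractible subspace of the resulting double mapping cylinder.

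First, I would recall from the preliminaries that the homotopy cofibration $\overline{B}\to B\#C \xrightarrow{p} C$ is equivalent to a homotopy pushout square with corners $\overline{B}$, $B\#C$, $*$, $C$. The composite $\overline{B}\to C$ is null homotopic, so pulling back the fibration $F\to L\to C$ along $\overline{B}\to C$ yields a space homotopy equivalent to the trivial bundle $F\times\overline{B}\to\overline{B}$, with the induced map to the fibre $F=L\times_C *$ given by the first-factor projection $p_1$. Mather's cube theorem then assembles the four pullbacks $F\times\overline{B}$, $M$, $F$, $L$ into a homotopy pushout, which is the square claimed.

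For the cofibration, I would write $g\colon F\times\overline{B}\to M$ for the top horizontal map of this pushout and $s\colon F\to F\times\overline{B}$ for the inclusion $y\mapsto (y, *_{\overline{B}})$. Tracing the pullback description of $M$, the composite $g\circ s$ is homotopic to $\alpha$. Assuming $\alpha$ null homotopic, the homotopy extension property for the cofibration $F\hookrightarrow F\times\overline{B}$ allows me to replace $g$ by a homotopic map $\tilde g$ that is strictly trivial on $F\times *_{\overline{B}}$, and hence factors as $\tilde g = g'\circ q$ through the quotient $q\colon F\times\overline{B}\to F\ltimes\overline{B}$.

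I would then realise the pushout $L$ as the double mapping cylinder of $F\xleftarrow{p_1} F\times\overline{B}\xrightarrow{\tilde g} M$ and identify the subspace consisting of the corner $F$ together with the cylinder segment $F\times *_{\overline{B}}\times I$ as a contractible cone on $F$, with apex at the basepoint of $M$. Collapsing this cone, the remainder of the cylinder quotients to $(F\ltimes\overline{B})\times I$ with its $t=0$ end coned off and its $t=1$ end attached to $M$ via $g'$, which is precisely the mapping cone of $g'$. Thus $L\simeq \mathrm{Cof}(g')$, producing the cofibration $F\ltimes\overline{B}\to M\xrightarrow{\pi} L$. The main obstacle I anticipate is the bookkeeping in this last identification: checking that the collapsing of the cone interacts correctly with the quotient $F\times\overline{B}\to F\ltimes\overline{B}$, so that no spurious identifications survive and the result is exactly $\mathrm{Cof}(g')$.
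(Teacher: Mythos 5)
Your proposal is correct and follows essentially the same route as the paper: the pushout square is obtained from Mather's Cube Lemma applied to the cube with bottom face the pushout coming from the cofibration $\overline{B}\rightarrow B\#C\xrightarrow{p}C$ and vertical faces the homotopy pullbacks along $f$. Your double-mapping-cylinder argument for the cofibration is a carefully worked-out version of the step the paper dispatches in one line (``we may pinch out a copy of $F$ in the asserted pushout''), and the key observation making it work --- that the restriction of $F\times\overline{B}\rightarrow M$ to $F\times *$ is $\alpha$, hence null homotopic --- is exactly the right one.
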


\begin{proof}
To prove the existence of the asserted homotopy pushout, we will use Mather's Cube Lemma. Indeed, consider the following diagram
\begin{equation}\label{dgm:cube}
    \begin{tikzcd}[row sep=1em, column sep = 1em]
        F\times\overline{B} \arrow[rr, "\beta"] \arrow[dr,swap,"p_1"] \arrow[dd,swap,"p_2"] &&
        M \arrow[dd] \arrow[dr] \\
        & F \arrow[rr, crossing over] && L \arrow[dd, "f"] \\
        \overline{B} \arrow[rr] \arrow[dr] && B\#C \arrow[dr, "p"]\\
        & * \arrow[rr] \arrow[from=uu, crossing over] && C. \\
    \end{tikzcd}
\end{equation}
We must show is that (\ref{dgm:cube}) commutes up to homotopy, that bottom face is a homotopy pushout and that the four vertical faces are homotopy pullbacks. 

The bottom face of (\ref{dgm:cube}) arises from the homotopy cofibration \(\overline{B}\rightarrow B\#C\xrightarrow{p}C\), and so is homotopy pushout. The front face is evidently a homotopy pullback, because it comes from the homotopy fibration we began with, as is the right-hand face of the cube, which is the right-hand sqaure in Diagram (\ref{dgm:main2}). Furthermore, it is an elementary fact that the left-hand face of the cube, together with the projection maps, is also homotopy pullback. 

What remains to show is that the map \(\beta:F\times\overline{B}\rightarrow M\) is chosen such that the diagram commutes up to homotopy and that the rear face is a homotopy pullback. Indeed, as the right-hand face is a homotopy pullback, \(\beta\) is induced by the existence of the composites \(F\times\overline{B}\rightarrow F\rightarrow L\) and \(F\times\overline{B}\rightarrow\overline{B}\rightarrow B\#C\), so the diagram does indeed homotopy commute. One then applies \cite{ark}*{Theorem 6.3.3}, which forces the rear face to be a homotopy pullback. In the special case in which the fibre map \(\alpha\) is null homotopic, we may pinch out a copy of \(F\) in the asserted pushout, giving the square 
\begin{equation*}
    \begin{tikzcd}[row sep=3em, column sep = 3em]
        F\ltimes\overline{B} \arrow[r] \arrow[d] & M \arrow[d, "\pi"]  \\
        * \arrow[r] & L.
    \end{tikzcd}
\end{equation*}
which is equivalent to the stated homotopy cofibration.
\end{proof}

% Recall the construction of Diagram (\ref{dgm:main1}). Letting $\pi:M\rightarrow L$ denote the projection map induced by the pullback, we have the fibration diagram
% \begin{equation}\label{dgm:main}
%     \begin{tikzcd}[row sep=1.5em, column sep = 1.5em]
%         F \arrow[rr] \arrow[dd, equal] && M \arrow[dd, "\pi"] \arrow[rr] && B\#C \arrow[dd, "p"] \\
%         \\
%         F \arrow[rr] && L \arrow[rr, "f"] && C.
%     \end{tikzcd}
% \end{equation}

\begin{rem}
Note that, because Diagram (\ref{dgm:main2}) is homotopy commutative, requiring \(\alpha\) to be null homotopic forces the fibre map \(F\rightarrow L\) to have also been null homotopic to begin with.
\end{rem}

We now give the thrust of this section, providing a circumstance in which the based loop space of the Poincar\'e Duality complex $M$ is homotopy equivalent to the based loop space of a connected sum. Let $X'=F\ltimes\overline{B}$ and $X=X'\cup e^m$ (the homotopy class of the attaching map \(S^{m-1}\rightarrow X'\) plays no role in what is to follow, so we suppress it in the definition of \(X\)). 

% \begin{thm}[Jeffrey-Selick]\label{thm:js}
% Take the situation as in Diagram (\ref{dgm:main2}). There exists a homotopy cofibration diagram
% \begin{equation*}
%     \begin{tikzcd}[row sep=1.5em, column sep = 1.5em]
%         &&&&&&& X' \arrow[rr] \arrow[dd, equal] && \overline{M} \arrow[dd] \arrow[rr] && \overline{L} \arrow[dd] \\
%         \\
%         &&&&&&& X' \arrow[rr] && M \arrow[rr, "\pi"] && L. &&&&&&& \square
%     \end{tikzcd}
% \end{equation*}
% \end{thm}

% \noindent We are now ready to state our Proposition. Let $X=X'\cup e^m$. 

\begin{prop}\label{prop:1}
Take the situation as in Diagram (\ref{dgm:main1}), and suppose that the map $\Omega p$ has a right homotopy inverse. Then the map $\Omega \pi$ has a right homotopy inverse. Moreover,  if \(\alpha\) is null homotopic and the attaching map of the top cell of $L$ is inert, then \[\Omega M\simeq \Omega(X\#L).\]
\end{prop}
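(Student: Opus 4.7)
For the first assertion, I would use that $M$ is defined as a homotopy pullback and that looping preserves homotopy pullbacks, yielding the homotopy pullback square
\[\begin{tikzcd}[row sep=1.5em, column sep=1.5em]
\Omega M \arrow[r] \arrow[d, "\Omega\pi"] & \Omega(B\#C) \arrow[d, "\Omega p"] \\
\Omega L \arrow[r, "\Omega f"] & \Omega C.
\end{tikzcd}\]
Given a right homotopy inverse $s$ of $\Omega p$, the pair $(1_{\Omega L}, s \circ \Omega f)$ is compatible since $\Omega p \circ s \circ \Omega f \simeq \Omega f$, and the universal property of the pullback delivers the desired right homotopy inverse of $\Omega \pi$.

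For the second assertion, the overall strategy is to exhibit both $\Omega M$ and $\Omega(X\#L)$ as splittings of the form $\Omega L \times \Omega(\Omega L \ltimes X')$. When $\alpha$ is null homotopic, Lemma \ref{lem:pushout} furnishes a homotopy cofibration $X' \to M \xrightarrow{\pi} L$, and since $\Omega \pi$ now admits a right homotopy inverse, the map $X' \to M$ is inert. Theorem \ref{thm:splitfib} then immediately yields
\[\Omega M \simeq \Omega L \times \Omega(\Omega L \ltimes X').\]

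To obtain the analogous splitting for $\Omega(X\#L)$ I would invoke Lemma \ref{lem:loophinv} with $A = S^{m-1}$, $B = X' \vee \overline{L}$, $C = X\#L$, $Y = X'$, and $X = \overline{L}$. The middle row is the cofibration $S^{m-1} \xrightarrow{f_X + g} X' \vee \overline{L} \to X\#L$ coming from $\overline{X\#L} \simeq X' \vee \overline{L}$, while the middle column is the wedge cofibration $X' \hookrightarrow X' \vee \overline{L} \xrightarrow{p} \overline{L}$. A short diagram chase identifies $Q$ with $L$, so that $q \colon \overline{L} \to L$ is the skeleton inclusion and the composite $p \circ (f_X + g)$ reduces to the top-cell attaching map $g$ of $L$. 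The hypotheses of Lemma \ref{lem:loophinv} are then in place: $p$ is a pinch onto a wedge summand (and so genuinely split), while $\Omega q$ has a right homotopy inverse precisely by the assumption that $g$ is inert. Lemma \ref{lem:loophinv} therefore produces
\[\Omega(X\#L) \simeq \Omega L \times \Omega(\Omega L \ltimes X'),\]
matching the splitting of $\Omega M$ and completing the proof.

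The principal obstacle is recognising how the connected sum $X\#L$ slots into the schema of Lemma \ref{lem:loophinv}; once the skeletal decomposition $\overline{X\#L} \simeq X' \vee \overline{L}$ is exploited to supply the requisite pushout data, verification of the hypotheses collapses cleanly onto the two inertness conditions baked into the proposition.
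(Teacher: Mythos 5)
Your proposal is correct and follows essentially the same route as the paper: the first claim via the universal property of the looped pullback square, the splitting $\Omega M\simeq\Omega L\times\Omega(\Omega L\ltimes X')$ via Lemma \ref{lem:pushout} and Theorem \ref{thm:splitfib}, and the matching splitting of $\Omega(X\#L)$ by feeding the cofibrations $S^{m-1}\to X'\vee\overline{L}\to X\#L$ and $X'\hookrightarrow X'\vee\overline{L}\to\overline{L}$ into Lemma \ref{lem:loophinv}. Your identification of $Q$ with $L$ and of the two required right homotopy inverses (the pinch section and the inertness of the top-cell attaching map of $L$) matches the paper exactly.
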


\begin{proof}

Denoting the right homotopy inverse of $\Omega p$ by $s:\Omega C\rightarrow \Omega(B\#C)$, consider the diagram
\begin{equation*}
    \begin{tikzcd}[row sep=1.5em, column sep = 1.5em]
        && \Omega L \arrow[dddr, equal, bend right=20] \arrow[dr, dashed, "\lambda"] \arrow[r, "\Omega f"] & \Omega C \arrow[drr, "s"] &&& \\
        &&& \Omega M \arrow[rr] \arrow[dd, "\Omega\pi"] && \Omega(B\#C) \arrow[dd, "\Omega p"] \\
        &&&&&&& \\
        &&& \Omega L \arrow[rr, "\Omega f"] && \Omega C
    \end{tikzcd}
\end{equation*}
where the map $\lambda$ will be detailed momentarily. Since the right-hand square of Diagram (\ref{dgm:main2}) is a homotopy pullback, so is the square in the above. Furthermore, since $\Omega p\circ s\simeq 1_{\Omega C}$, the diagram commutes. As $\Omega M$ is the homotopy pullback of $\Omega f$ across $\Omega p$, the map $\lambda$ exists and we have that $\Omega\pi\circ\lambda\simeq1_{\Omega L}$. In other words, the map $\lambda$ is a right homotopy inverse for $\Omega\pi$. 

Consequently, in the case when \(\alpha\) is null homotopic, we apply Theorem \ref{thm:splitfib} to the homotopy cofibration \[X'\rightarrow M\xrightarrow{\pi}L\] from the special case of Lemma \ref{lem:pushout}. Indeed, since $\Omega \pi$ has a right homotopy inverse, the map $X'\rightarrow M$ is by definition inert, so Theorem \ref{thm:splitfib} immediately gives us that \[\Omega M\simeq \Omega L\times \Omega(\Omega L\ltimes X').\] On the other hand, let us now consider the connected sum $X\#L$. Take two homotopy cofibrations: one is the attaching map of the top-cell of $X\#L$, and the other is from inclusion of a wedge summand \[S^{m-1}\rightarrow X'\vee \overline{L}\rightarrow X\#L\text{\; and \;}X'\hookrightarrow X'\vee \overline{L}\xrightarrow{q} \overline{L}.\]
We combine these to give a cofibration diagram, in the sense of (\ref{dgm:setup})
\begin{equation*}
    \begin{tikzcd}[row sep=1.5em, column sep = 1.5em]
        &&& X' \arrow[rr, equal] \arrow[dd, hook] && X' \arrow[dd] \\
        &&&&&&& \\
        &S^{m-1} \arrow[rr] \arrow[dd, equal] && X'\vee \overline{L} \arrow[dd, "q"] \arrow[rr] && X\#L \arrow[dd] \\
        &&&&&&& \\
        & S^{m-1} \arrow[rr] && \overline{L} \arrow[rr, "j"] && L. 
    \end{tikzcd}
\end{equation*}
The map $q$ pinches to the second wedge summand, and therefore has a right homotopy inverse given by inclusion; therefore $\Omega q$ also has a right homotopy inverse. Moreover, if the attaching map of the top-cell of $L$ is inert, the map $\Omega j$ has a right homotopy inverse, by definition. Thus Lemma \ref{lem:loophinv} applies, implying there is a homotopy equivalence \[\Omega(X\#L)\simeq \Omega L\times \Omega(\Omega L\ltimes X').\] Thus $\Omega M$ and $\Omega(X\#L)$ are both homotopy equivalent to $\Omega L\times \Omega(\Omega L\ltimes X')$, and are therefore homotopy equivalent to each other.
\end{proof}

\begin{exa}
A general class of examples that satisfy the requirement that \(\alpha\simeq*\) are sphere bundles, \(S^r\rightarrow L\rightarrow C\), where the pullback \(M\) has trivial \(r^{th}\) homotopy group. Consider for example the classical Hopf bundle \(S^1\rightarrow S^3\xrightarrow{\eta} S^2.\) Taking products with the trivial fibration $*\rightarrow S^4\rightarrow S^4$ yields a new homotopy fibration \[S^1\rightarrow S^3\times S^4\xrightarrow{\eta\times1}S^2\times S^4.\] Applying our construction with $B=S^3\times S^3$, we have the following pullback diagram of homotopy fibrations 
\begin{equation*}
    \begin{tikzcd}[row sep=1.5em, column sep = 1.5em]
        & S^1 \arrow[rr] \arrow[dd, equal] && M \arrow[dd, "\pi"] \arrow[r] & (S^3\times S^3)\#(S^2\times S^4) \arrow[dd, "p"]\\
        \\
        & S^1 \arrow[rr] && S^3\times S^4 \arrow[r, "\eta\times1"] & S^2\times S^4.
    \end{tikzcd}
\end{equation*}
Using techniques from \cite{t20}*{Section 9} it can be shown that $\Omega(S^2\times S^4)$ retracts off $\Omega((S^3\times S^3)\#(S^2\times S^4))$ via a right homotopy inverse for $\Omega p$. Moreover, the attaching map for the top-cell of the product $S^3\times S^4$ is known to be inert.

Now we show that \(\pi_1(M)\cong0\). The existence of a right homotopy inverse for the map \(\Omega p\) implies that its homotopy fibre (and consequently the homotopy fibre of \(\pi\)) is homotopy equivalent to \(\Omega(\Omega(S^2\times S^4)\ltimes(S^3\vee S^3))\), by Theorem \ref{thm:splitfib}. It is now easy to check that the long exact sequence of homotopy groups induced by the fibration sequence \(\Omega(\Omega(S^2\times S^4)\ltimes(S^3\vee S^3))\rightarrow M\xrightarrow{\pi}S^3\times S^4\) forces \(\pi_1(M)\) to be trivial. Therefore Proposition \ref{prop:1} applies, with \[X'\simeq S^1\ltimes(S^3\vee S^3)\simeq S^3\vee S^3\vee S^4\vee S^4.\] By gluing a 7-cell to $X'$, we may take $X=(S^3\times S^4)\#(S^3\times S^4)$. Hence we obtain a homotopy equivalence \[\Omega M\simeq \Omega((S^3\times S^4)\#(S^3\times S^4)\#(S^3\times S^4)).\] To conclude this example, we remark that many of the situations considered by Duan in \cite{duan} also fit into this framework.
\end{exa}

\section{The Rational Homotopy Perspective}\label{sec:rational}

We wish to apply Proposition \ref{prop:1} in the context of rational homotopy theory. Let \[S^{k-1}\xrightarrow{f}Y\xrightarrow{i}Y\cup_f e^k\] be a homotopy cofibration, where the map $f$ attaches a $k$-cell to $Y$ and $i$ is the inclusion. The map $f$ is \textit{rationally inert} if $\Omega i$ induces a surjection in rational homology. This implies that, rationally, $\Omega i$ has a right homotopy inverse. The following theorem was first proved in \cite{hl}*{Theorem 5.1}, though we prefer the statement found in \cite{fh}.

\begin{thm}[Halperin-Lemaire]\label{thm:hl}
If $Y\cup_f e^k$ is a Poincar\'e Duality complex and $H^*(Y;\Q)$ is generated by more than one element, then the attaching map $f$ is rationally inert. \hfill $\square$
\end{thm}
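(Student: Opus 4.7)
The plan is to reduce the theorem to a purely algebraic statement about graded Lie algebras via the Quillen model. Let $L_Y$ denote a Quillen Lie model of the (simply connected) space $Y$; the Lie model of $P := Y \cup_f e^k$ is then obtained from $L_Y$ by adjoining a single generator $z$ of degree $k-1$ whose differential $dz \in L_Y$ represents the homotopy class of $f$. By the Milnor-Moore theorem, rational inertness of $f$ is equivalent to the induced inclusion of universal enveloping algebras $UL_Y \hookrightarrow UL_P$ admitting a retraction of graded coalgebras, so the task becomes an algebraic one about this Lie algebra extension.

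The crucial step is to exploit the combination of Poincar\'e duality on $P$ and the multi-generator hypothesis on $Y$ to pin down the form of $dz$. Poincar\'e duality provides a non-degenerate cup product pairing on $H^*(P;\Q)$, and since $P$ is obtained from $Y$ by attaching a single top cell, the cup product relations in $H^*(P;\Q)$ not already present in $H^*(Y;\Q)$ are precisely those forced by $f$. Combined with the assumption that $H^*(Y;\Q)$ has at least two independent algebra generators, this forces $dz$, after a suitable change of basis in $L_Y$, into the shape of a Lie bracket $[a,b]$ of Lie-independent elements of $L_Y$. In the single-generator case such a decomposition is obstructed, and classical examples such as the inclusion $\C P^{n-1}\hookrightarrow\C P^n$ witness that inertness can genuinely fail when the hypothesis is dropped.

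The heart of the proof is then the following algebraic assertion: if $L$ is a connected graded Lie algebra of finite type over $\Q$ and $a, b \in L$ are Lie-independent, then adjoining a free generator $z$ with $dz = [a, b]$ produces an extension $L \hookrightarrow L'$ for which $UL'$ is free as a left $UL$-module. This is the ``autopsy'' lemma of Halperin-Lemaire, and it is precisely the main obstacle: its verification requires careful bookkeeping with the Poincar\'e-Birkhoff-Witt basis of $UL$ and a filtration argument controlling how the new generator $z$ interacts with existing bracket relations. For a write-up I would follow the streamlined presentation in \cite{fh} rather than attempt to reproduce the original combinatorial argument of \cite{hl}.
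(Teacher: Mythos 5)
The paper offers no proof of this statement: it is quoted as a known theorem of Halperin--Lemaire \cite{hl}*{Theorem 5.1}, in the formulation of \cite{fh}, so the only honest comparison is between your sketch and the arguments in those sources. Your general framework (Quillen models, a new generator $z$ with $dz$ a cycle representing $f$, and the Milnor--Moore identification $H_*(\Omega Y;\Q)\cong UL_Y$) is the right one, but two things go wrong. First, a small one: rational inertness is the statement that $UL_Y\to UL_{P}$ is \emph{surjective}; this map is induced by the inclusion $Y\to P$ and is in general neither injective nor split, so phrasing the goal as ``the inclusion $UL_Y\hookrightarrow UL_P$ admits a coalgebra retraction'' has the map pointing the wrong way and asks for the wrong property.

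Second, and fatally, the step you call crucial is false: Poincar\'e duality together with the two-generator hypothesis does \emph{not} force $dz$ into the form of a single bracket $[a,b]$ after a change of basis. Take $P=(S^2\times S^2)\#(S^2\times S^2)$, so $Y=\overline{P}\simeq S^2\vee S^2\vee S^2\vee S^2$ with Lie model the free Lie algebra $\mathbb{L}(a_1,b_1,a_2,b_2)$ and $dz=[a_1,b_1]+[a_2,b_2]$. If this element were conjugate to a single bracket of two Lie-independent elements, $P$ would have the rational homotopy type of a product of two spheres; it does not (its loop space homology grows exponentially, unlike that of any $S^p\times S^q$). So the ``autopsy'' lemma you invoke, which treats only the case $dz=[a,b]$, does not cover the theorem, and the genuine content of Halperin--Lemaire is precisely the part you have skipped: their argument (and the later treatment by F\'elix--Halperin) proceeds through depth and $\mathrm{Ext}_{UL}$ computations exploiting the Gorenstein property of Poincar\'e duality spaces, not through a normal form for the attaching class. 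As written, the proposal proves inertness only for attaching maps that are single Whitehead products, which excludes the connected sums that are the paper's main objects of interest.
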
 

\noindent This leads us to the statement and proof of the Main Theorem.

\begin{thm}\label{thm:connsum}
Given spaces and maps as in Diagram (\ref{dgm:main2}), if 
\begin{enumerate}
    \item[(i)] the map \(\alpha\) is (rationally) null homotopic, and,
    \item[(ii)] both $H^*(\overline{C};\Q)$ and $H^*(\overline{L};\Q)$ are generated by more than one element,
\end{enumerate}
there is a rational homotopy equivalence $\Omega M\simeq \Omega (X\#L)$. 
\end{thm}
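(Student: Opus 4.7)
The plan is to verify that the hypotheses of Proposition \ref{prop:1} hold rationally, and then rerun its argument in the rational category. Hypothesis (i) already gives the null-homotopy condition on $\alpha$, so two things remain to check: that $\Omega p$ has a rational right homotopy inverse, and that the attaching map of the top cell of $L$ is rationally inert. The second is immediate: applying Halperin--Lemaire (Theorem \ref{thm:hl}) to $L$ and its $(m-1)$-skeleton $\overline{L}$, using hypothesis (ii), shows that the attaching map $S^{m-1}\to\overline{L}$ is rationally inert, so $\Omega j$ admits a rational right homotopy inverse, exactly as required in the setup for Lemma \ref{lem:loophinv}.

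To handle $\Omega p$, I would first apply Theorem \ref{thm:hl} to $C$ and $\overline{C}$, obtaining a rational right homotopy inverse $s:\Omega C \to \Omega\overline{C}$ to the looped skeletal inclusion $\Omega i$, where $i:\overline{C}\hookrightarrow C$. Next I would exploit the identification $\overline{B\#C} \simeq \overline{B}\vee\overline{C}$: let $\tilde{i}:\overline{C}\hookrightarrow \overline{B}\vee\overline{C} \hookrightarrow B\#C$ be the evident composite. Since $p$ collapses the $\overline{B}$ wedge summand and restricts to the skeletal inclusion on the $\overline{C}$ summand, we have $p\circ\tilde{i}\simeq i$. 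Looping, $\Omega p \circ (\Omega\tilde{i}\circ s) \simeq \Omega i \circ s \simeq 1_{\Omega C}$, so $\Omega\tilde{i}\circ s$ is the desired rational right homotopy inverse to $\Omega p$.

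With both hypotheses verified, the remainder is a direct transcription of the proof of Proposition \ref{prop:1} into the rational category. The pullback argument in that proof yields a rational right homotopy inverse $\lambda:\Omega L\to \Omega M$ to $\Omega\pi$, so the cofibration $X' \to M \xrightarrow{\pi} L$ coming from Lemma \ref{lem:pushout} (which applies since $\alpha$ is rationally null homotopic) is rationally inert. The rational form of Theorem \ref{thm:splitfib} then gives $\Omega M \simeq_{\Q} \Omega L \times \Omega(\Omega L\ltimes X')$. The same splitting $\Omega(X\#L) \simeq_{\Q} \Omega L \times \Omega(\Omega L\ltimes X')$ follows via Lemma \ref{lem:loophinv} exactly as in the proof of Proposition \ref{prop:1}, using the two cofibrations $S^{m-1}\to X'\vee\overline{L}\to X\#L$ and $X'\to X'\vee\overline{L}\to\overline{L}$. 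Combining, we obtain the desired rational homotopy equivalence $\Omega M \simeq \Omega(X\#L)$.

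The main obstacle is forging the bridge between hypothesis (ii) on $\overline{C}$, which only directly controls the attaching map of $C$'s top cell, and the property actually needed, that $\Omega p$ (a map out of the connected sum $B\#C$) splits rationally. The key observation is the identification $p\circ\tilde{i}\simeq i$, which transfers rational inertness across the wedge decomposition of $\overline{B\#C}$ so that the skeletal hypothesis on $C$ forces $\Omega p$ to admit the required right homotopy inverse.
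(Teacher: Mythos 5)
Your proposal is correct and follows essentially the same route as the paper: apply Halperin--Lemaire to both $C$ and $L$ to get rational inertness of their top-cell attaching maps, produce a rational right homotopy inverse for $\Omega p$ via the wedge decomposition $\overline{B\#C}\simeq\overline{B}\vee\overline{C}$, and then run Proposition \ref{prop:1} rationally. The only cosmetic difference is that you build the right inverse of $\Omega p$ by directly composing $\Omega\tilde{i}\circ s$ using $p\circ\tilde{i}\simeq i_C$, whereas the paper invokes Lemma \ref{lem:loophinv} on the corresponding cofibration diagram---but the proof of that lemma is exactly this composition, so the arguments coincide.
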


\begin{proof}
By Theorem \ref{thm:hl}, the attaching maps for the top-cells of $C$ and $L$ are rationally inert. We have the homotopy pullback below, which is the right-hand square of (\ref{dgm:main2})
\begin{equation*}
    \begin{tikzcd}[row sep=1.5em, column sep = 1.5em]
        M \arrow[rr] \arrow[dd, "\pi"] && B\#C \arrow[dd, "p"] \\
        \\
        L \arrow[rr, "f"] && C.
    \end{tikzcd}
\end{equation*}
Rationalising spaces and maps in this pullback square, we see that Proposition \ref{prop:1} would apply if the map $\Omega p$ has a rational right homotopy inverse, as the attaching map for the top-cell of $L$ is rationally inert. Thus we would have a rational homotopy equivalence $\Omega M\simeq\Omega (X\#L)$. 

It therefore remains to show that the map $\Omega p$ has a rational right homotopy inverse. With this in mind, consider the following homotopy cofibration diagram
\begin{equation*}
    \begin{tikzcd}[row sep=1.5em, column sep = 1.5em]
        &&& \overline{B} \arrow[rr, equal] \arrow[dd, hook] && \overline{B} \arrow[dd] \\
        &&&&&&& \\
        &S^{n-1} \arrow[rr] \arrow[dd, equal] && \overline{B}\vee \overline{C} \arrow[dd, "q"] \arrow[rr] && B\#C \arrow[dd, "p"] \\
        &&&&&&& \\
        & S^{n-1} \arrow[rr] && \overline{C} \arrow[rr, "i_C"] && C. 
    \end{tikzcd}
\end{equation*}
As the pinch map $q$ has a right homotopy inverse, so does $\Omega q$. Furthermore, the attaching map of the top-cell of $C$ is rationally inert, and therefore $\Omega i_C$ has a right homotopy inverse after rationalisation. Therefore the map $\Omega p$ also has a (rational) right homotopy inverse, by Lemma \ref{lem:loophinv}.
\end{proof}

\begin{rem}
Recall that a simply connected space $Y$ is called \textit{rationally elliptic} if $dim(\pi_*(Y)\otimes\Q)<\infty$, and called\textit{ rationally hyperbolic} otherwise \cite{fht}. We remark briefly on the rational hyperbolicity of the spaces discussed above. 

Indeed, suppose that the skeleton $\overline{B}$ is a suspension. Then, as $X'=F\ltimes\overline{B}$, we have a homotopy equivalence $X'\simeq(F\wedge\overline{B})\vee\overline{B}$, which is again a suspension. Thus, rationally, $X'$ is homotopy equivalent to a wedge of spheres. Assuming $X'$ is rationally homotopy equivalent to a wedge containing more than one sphere of dimension greater than 1, this would imply the rational hyperbolicity of $\Omega X$. Indeed, this is guaranteed if the ring $H^*(\overline{B};\Q)$ has more that one generator of degree 2 or more, or if $H^*(\overline{B};\Q)$ has one such generator and $F$ is not rationally contractible. Since $X'$ homotopy retracts off $\Omega L\ltimes X'$, by Theorem \ref{thm:connsum} we have that $\Omega X'$ retracts off $\Omega M$. With the assumptions on $X'$ above, this implies that $\Omega M$ is rationally hyperbolic. 

As a final observation, note that a natural situation in which $\overline{B}$ has the homotopy type of a suspension would be when $B$ is sufficiently highly connected: by \cite{ganeacogroups}, if $B$ is $k$-connected, $\overline{B}$ has the homotopy type of a suspension if $n\leq3k+1$. For example, take $B$ to be an $(n-1)$-connected $2n$-dimensional Poincar\'e Duality complex. 
\end{rem}

\bibliographystyle{amsplain}
\bibliography{bib}

\end{document}